\newtheorem{Theorem}{Theorem}
\newtheorem{Corollary}[Theorem]{Corollary}
\newtheorem{Lemma}[Theorem]{Lemma}
\newtheorem{Definition}[Theorem]{Definition}
\newenvironment{proof}
{\begin{trivlist}\item[]{{\sc Proof.}}}{\hfill{$\square$}\noindent\end{trivlist}}
\title{Fast regocnition of planar non unit distance graphs}
\author{Sascha Kurz$^\star$}
\date{\small $^\star$Fakult\"at f\"ur Mathematik, Physik und Informatik, Universit\"at Bayreuth, Germany, sascha.kurz@uni-bayreuth.de}
\begin{document}

\maketitle

{\small
\noindent \textbf{Abstract:}
We study criteria attesting that a given graph can not be embedded in the plane so that neighboring vertices are at unit
distance apart and the straight line edges do not cross.
}


\section{Introduction}

\noindent
A graph $G=(V,E)$ is called a unit distance graph in the plane if there is an embedding of the vertex set $V$ into the plane such that every pair of adjacent vertices is at unit distance apart and the edges are non-crossing. Recognizing whether a given graph is a unit distance graph in the plane is NP-hard, see \cite{pre05493581,0744.05053}. Nevertheless the problem whether a given graph can be embedded into the plane with non-crossing straight-line edges having a prescribed Euclidean length occurs in several applications, see e.~g.{} \cite{pre05493581,1052.05004}. So in this paper we investigate necessary conditions for planar unit distance graphs which can be tested quickly. These objects can be visualized using matchsticks, so we also call them matchstick graphs. For a paper of similar flavor we refer the interested reader to \cite{pre05523164}, where a very effective and fast heuristic for the recognition of non-Yutsis graphs is presented. As a benchmark we consider $3$-connected $4$-regular planar graphs, where a graph $G$ is called $k$-connected if the graph remains connected after deleting fewer than $k$ vertices from the graph. These graphs can be generated very quickly using the computer program \texttt{plantri} \cite{plantri}\footnote{Actually, \texttt{plantri} has very fast routines for the generation of $3$-connected quadrangulations of the sphere, whose duals are $3$-connected $4$-regular planar graphs.} and their counts for $30$ to $33$ vertices are: $16387852863$, $59985464681$, $220320405895$, and $811796327750$.

%
%
%
%
%
%
%
%

\begin{Definition}
  Let $\mathcal{M}$ be a unit distance graph in the plane. By $n(\mathcal{M})=|V|$ we denote the
  number of vertices, by $\mathcal{K}(\mathcal{M})$ we denote the set of vertices which is situated
  on the outer face of $\mathcal{M}$ and by $\mathcal{I}(\mathcal{M})$ we denote the set of the remaining
  vertices. For the cardinality of $\mathcal{K}(\mathcal{M})$ we introduce the notation $k(\mathcal{M})$. By
  $A_i(\mathcal{M})$ we denote the number of faces of $\mathcal{M}$ which are $i$-gons. Here we also count
  the outer face.
\end{Definition}

Whenever it is clear from the context which matchstick graph $\mathcal{M}$ is meant we only write $n$, $\mathcal{K}$, $\mathcal{I}$, $k$, $A_i$ instead of $n(\mathcal{M})$, $\mathcal{K}(\mathcal{M})$, $\mathcal{I}(\mathcal{M})$, $k(\mathcal{M})$, $A_i(\mathcal{M})$.

One of the basic tools for planar graphs is the Eulerian polyhedron formula $|V|-|E|+|F|=2$ which leads to the following lemma:

\begin{Lemma}
  For a $r$-regular matchstick graph $\mathcal{M}$ we have
  \begin{eqnarray}
    |E|&=&\frac{1}{2}\cdot\sum_{i=3}^\infty i\cdot A_i,\nonumber\\
    |V|&=&\frac{1}{r}\cdot\sum_{i=3}^\infty i\cdot A_i,\nonumber\\
    |F|&=&\sum\limits_{i=3}^\infty A_i,\nonumber\\
    2r&=&\sum_{i=3}^\infty \!\left(i-\frac{r(i-2)}{2}\right) A_i\label{eq_A_i_sum}\\
    &=&A_3-A_5-2A_6-3A_7-4A_8-\dots,\nonumber\\
    n&=&|V|=|F|-2.\nonumber
  \end{eqnarray}
\end{Lemma}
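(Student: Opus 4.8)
The plan is to combine two elementary double-counting identities with Euler's polyhedron formula $|V|-|E|+|F|=2$. Since $\mathcal{M}$ is drawn in the plane with non-crossing straight-line edges, its underlying graph together with this drawing is a connected plane graph, so Euler's formula applies; moreover the graph is finite, hence $A_i=0$ for all but finitely many $i$ and every sum below is a finite sum. First I would count edge--face incidences in two ways: adding up the sizes of all faces, the outer face included, gives $\sum_{i\ge 3} i\cdot A_i$, whereas each edge of $\mathcal{M}$ bounds exactly two faces, so this sum equals $2|E|$; this is the first identity $|E|=\tfrac12\sum_{i\ge3} i A_i$. Second, the handshake lemma for the $r$-regular graph gives $r|V|=\sum_{v\in V}\deg(v)=2|E|$, and substituting the value of $|E|$ just obtained yields $|V|=\tfrac1r\sum_{i\ge3} i A_i$. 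The identity $|F|=\sum_{i\ge3}A_i$ is immediate from the definition of the $A_i$, as every face is an $i$-gon for exactly one $i\ge 3$.

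It remains to derive the last two lines. Substituting $|V|=\tfrac1r\sum iA_i$, $|E|=\tfrac12\sum iA_i$ and $|F|=\sum A_i$ into $|V|-|E|+|F|=2$ and multiplying through by $2r$ gives $\sum_{i\ge3}\bigl(2i-ri+2r\bigr)A_i=4r$; since $2i-ri+2r=2\bigl(i-\tfrac{r(i-2)}{2}\bigr)$, dividing by $2$ rearranges this to $\sum_{i\ge3}\bigl(i-\tfrac{r(i-2)}{2}\bigr)A_i=2r$, which is \eqref{eq_A_i_sum}. For the case $r=4$ that is relevant here the coefficient of $A_i$ becomes $i-2(i-2)=4-i$, so the left-hand side equals $A_3-A_5-2A_6-3A_7-4A_8-\cdots$, as stated. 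Finally, substituting $|E|=\tfrac r2|V|$ into Euler's formula gives $\bigl(1-\tfrac r2\bigr)|V|+|F|=2$; for $r=4$ this collapses to $|F|-|V|=2$, i.e. $n=|V|=|F|-2$.

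The only genuinely delicate point is the assertion that each edge is incident to exactly two distinct faces, which is what makes $\sum_{i\ge3} iA_i=2|E|$ hold and which fails for a bridge (both of whose sides lie in one face). For the benchmark graphs, the $3$-connected $4$-regular planar graphs, this is automatic, and in general one should take the matchstick graphs under consideration to be connected and bridgeless (i.e. $2$-edge-connected) so that every face, the outer one included, is bounded by a cycle of length at least $3$; this costs nothing for an $r$-regular graph with $r\ge 2$ that one wishes to realise as a matchstick graph. With that convention in force, every remaining step is routine bookkeeping, and I expect no obstacle beyond keeping track of which of the stated formulas use the value $r=4$ rather than a general $r$.
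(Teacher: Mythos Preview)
Your argument is correct and is exactly the derivation the paper has in mind: the paper does not spell out a proof at all but simply introduces the lemma as a consequence of Euler's formula $|V|-|E|+|F|=2$, and your double-counting of edge--face incidences together with the handshake identity $r|V|=2|E|$ is the standard way to unpack that. Your remark that the bridgeless assumption is needed for $\sum_i iA_i=2|E|$, and that the last two displayed lines use the specific value $r=4$, are both on point and worth keeping.
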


\section{Necessary criteria of unit distance graphs in the plane}
\label{sec_criteria}

\noindent
As mentioned in the introduction we can not expect to find sufficient criteria for the existence of a unit distance embedding in the plane which can be checked algorithmically in polynomial time. Utilizing exact coordinates is not feasible in any case since some unit distance graphs may be flexible and thus have an uncountable number of embeddings in the plane. Even if a unique embedding of a unit distance graph in the plane exists there are some complications for computer algebra systems. In \cite{gerbracht} the exact coordinates of the Harborth graph, a $3$-connected $4$-regular planar graph consisting of $52$~vertices,  were explicitly calculated. Some minimal polynomials of vertex coordinates have a degree of $22$. The determination of the algebraic expressions for the coordinates of the vertices of the Harborth graph required an article of 18~pages and several days of cpu time. So there really is a need for coordinate-free criteria.

Our first argument uses the area of faces in planar graphs. The maximum area of an equilateral $k$-gon, whose edges have length $1$, is given by
\begin{equation}
  A_{\text{max}}(k)=\frac{k}{4}\cdot\cot\left(\frac{\pi}{k}\right).
\end{equation}
Using lower bounds for the area of the inner faces we can deduce some restrictions on parameters of unit distance graphs in the plane. Due to Equation~(\ref{eq_A_i_sum}) we have
$A_3\ge 4+k$ for $4$-regular matchstick graphs. In Table \ref{table_max_area} we have listed the maximum area of an equilateral $k$-gon measured in units of equilateral triangles.

\begin{table}[ht]
  \begin{center}
  \caption{Maximum number of equilateral triangles in an equil.{} $k$-gon.}
  \label{table_max_area}
  \rule{\textwidth}{0.3mm}
  \begin{tabular}{rr|rr|rr|rr|rr|rr}
    \!k\!&\!$\frac{A_{\text{max}}(k)}{A_{\text{max}}(3)}$\!&\!k\!&\!$\frac{A_{\text{max}}(k)}{A_{\text{max}}(3)}$\!&\!
    k\!&\!$\frac{A_{\text{max}}(k)}{A_{\text{max}}(3)}$\!&\!k\!&\!$\frac{A_{\text{max}}(k)}{A_{\text{max}}(3)}$\!&\!
    k\!&\!$\frac{A_{\text{max}}(k)}{A_{\text{max}}(3)}$\!&\!k\!&\!$\frac{A_{\text{max}}(k)}{A_{\text{max}}(3)}$\!\\[1.3mm]
    \hline
    \!3\!&\!1.00\!&\!4\!&\!2.31\!&\!5\!&\!3.98\!&\!6\!&\!6.00\!&\!7\!&\!8.40\!&\!8\!&\!11.16\!\\[1mm]
    \!9\!&\!14.28\!&\!10\!&\!17.77\!&\!11\!&\!21.63\!&\!12\!&\!25.86\!&\!13\!&\!30.46\!&\!14\!&\!35.42\!\\
  \end{tabular}
  \rule{\textwidth}{0.3mm}
  \end{center}
\end{table}

Concerning the minimum area of an equilateral $s$-gon in \cite{0968.51014} the authors have proved $A_{\text{min}}(s)=\frac{\sqrt{3}}{4}$ for odd $s$.
So an inner pentagon in a matchstick graph has an area of at least $\frac{\sqrt{3}}{4}$ and forces one additional triangle in Equation~(\ref{eq_A_i_sum}). An inner $s$-gon with $s\ge 6$ forces at least two additional triangles due to Equation~(\ref{eq_A_i_sum}). So for each given $k$ the number $A_i$ is bounded from above for $i\neq 4$. Only the quadrangles cause some trouble. Therefore we take a closer look at configurations of triangles and quadrangles at an inner point.

\begin{Lemma}
  \label{lemma_inner_3_4_configuration}
  If a $4$-regular matchstick graph contains an inner vertex but no inner $s$-faces with $s\ge 5$ then it
  contains one of the configurations of Figure \ref{fig_configuration_1}, where the total area of the quadrangles
  is greater than $\frac{\sqrt{3}}{2}$.
\end{Lemma}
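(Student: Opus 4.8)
The plan is to argue locally at a single inner vertex. Let $v$ be an inner vertex of the $4$-regular matchstick graph $\mathcal{M}$. Exactly four faces are incident with $v$, and by hypothesis each of them is a triangle or a quadrangle. A triangular face has all three sides of length $1$, hence is equilateral and contributes the interior angle $\frac{\pi}{3}$ at $v$. A quadrangular face is a simple quadrilateral all of whose sides have length $1$; such a quadrilateral is necessarily a rhombus, so it is convex, contributes some interior angle $\alpha\in(0,\pi)$ at $v$, and has area $\sin\alpha$. Let $q$ be the number of quadrangular faces at $v$ and write $\alpha_1,\dots,\alpha_q$ for their angles at $v$. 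Since the angles around $v$ sum to $2\pi$ and the $4-q$ triangles account for $(4-q)\frac{\pi}{3}$ of this, we get $\alpha_1+\dots+\alpha_q=2\pi-(4-q)\frac{\pi}{3}=(q+2)\frac{\pi}{3}$. For $q=0$ this would read $0=\frac{4\pi}{3}$, and for $q=1$ it would force $\alpha_1=\pi$, a degenerate rhombus; both are impossible. Hence $q\ge 2$ and the four faces at $v$ realise one of the configurations of Figure~\ref{fig_configuration_1}.

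The remaining task is the lower bound $\sum_{i=1}^{q}\sin\alpha_i>\frac{\sqrt{3}}{2}$ for the total area of the quadrangles at $v$ (note that $\frac{\sqrt{3}}{2}=2A_{\text{max}}(3)$ is the area of two unit equilateral triangles). For $q=2$ we have $\alpha_1+\alpha_2=\frac{4\pi}{3}$ with $\alpha_1,\alpha_2\in(0,\pi)$, which forces $\alpha_1,\alpha_2\in\left(\frac{\pi}{3},\pi\right)$ and $|\alpha_1-\alpha_2|<\frac{2\pi}{3}$; the sum-to-product identity then yields $\sin\alpha_1+\sin\alpha_2=2\sin\frac{\alpha_1+\alpha_2}{2}\cos\frac{\alpha_1-\alpha_2}{2}=\sqrt{3}\cos\frac{\alpha_1-\alpha_2}{2}>\sqrt{3}\cos\frac{\pi}{3}=\frac{\sqrt{3}}{2}$. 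For $q=3$ we have $\alpha_1+\alpha_2+\alpha_3=\frac{5\pi}{3}$; since $\sin$ is strictly concave on $[0,\pi]$, the map $(\alpha_i)\mapsto\sum\sin\alpha_i$ on the polytope $\{(\alpha_i)\in[0,\pi]^3:\sum\alpha_i=\frac{5\pi}{3}\}$ attains its minimum only at a vertex of that polytope, and the vertices are precisely the permutations of $\left(\pi,\frac{2\pi}{3},0\right)$, where the value equals $\frac{\sqrt{3}}{2}$. As the true angles lie in the open interval $(0,\pi)$, no such vertex is attained, so $\sum\sin\alpha_i>\frac{\sqrt{3}}{2}$ again.

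The genuine obstacle is the case $q=4$, i.e.\ four quadrangles around $v$: then $\alpha_1+\dots+\alpha_4=2\pi$ with $\alpha_i\in(0,\pi)$, and here the bound $\sum\sin\alpha_i>\frac{\sqrt{3}}{2}$ cannot be obtained from the angle condition alone, since a ``pinwheel'' of four rhombi with angles close to $(\pi,0,\pi,0)$ has total area arbitrarily close to $0$. Ruling this out must exploit more of the global structure of $\mathcal{M}$ than the angle balance at $v$: the plan is to show that such a thin pinwheel forces, at a neighbouring vertex $w_i$ where two of the four rhombi meet with a large combined angle, a local angle pattern that cannot be completed by triangles and quadrangles only, or that it cannot be closed up to a finite $4$-regular matchstick graph without creating an inner $s$-face with $s\ge5$; consequently in any admissible $\mathcal{M}$ some inner vertex already realises one of the configurations treated above, or a four-quadrangle configuration whose rhombi are bounded away from degeneracy. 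I expect this case distinction together with the propagation argument for the pinwheel to be the main work, the classification of the vertex stars and the two area estimates being routine by comparison.
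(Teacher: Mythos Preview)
Your local analysis at an inner vertex and the explicit area estimates for $q=2$ and $q=3$ are exactly what the paper intends; where you compute with the sum-to-product formula and concavity of $\sin$, the paper merely writes ``a little calculus together with $0\le\alpha_i\le\pi$ yields the stated result''. On that part you and the paper agree, and your version is more careful.

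The difficulty you raise for $q=4$ is genuine, and the paper's own proof does not address it either: the paper only argues ``at most two of them can be triangles'', i.e.\ $q\ge 2$, and then passes straight to the area bound. But the three configurations in Figure~\ref{fig_configuration_1} have $fs(v)\in\{\{3,3,4,4\},\{3,4,4,4\}\}$ (see the sentence after the figure and the side constraint in BLP~(\ref{lp_3_4_configurations})); the four-rhombus star $q=4$ is \emph{not} among them, and your pinwheel example with angles near $(\pi,0,\pi,0)$ shows that the inequality $\sum\sin\alpha_i>\frac{\sqrt{3}}{2}$ truly fails there. So the lemma is really an \emph{existence} statement: some inner vertex of $\mathcal{M}$ has $q\in\{2,3\}$, and at that vertex the area bound holds.

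Your proposed ``pinwheel propagation'' programme is therefore aimed at the right gap but is much heavier than necessary. A short global count closes it. By Equation~(\ref{eq_A_i_sum}) one has $A_3\ge k+4>k$, so there are strictly more inner triangles than outer vertices. In a $2$-connected $4$-regular plane graph each outer vertex is incident to exactly three inner faces (degree four, one incidence spent on the outer face); hence if every inner triangle used only outer vertices we would have $3A_3\le 3k$, contradicting $A_3>k$. Thus some inner triangle has an inner vertex $v$; at that $v$ we get $q\le 3$, and together with your angle argument $q\ge 2$, so $v$ realises one of the configurations of Figure~\ref{fig_configuration_1}. Your estimates for $q=2$ and $q=3$ then finish the proof. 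In short: your diagnosis of the gap is correct, but the cure is a two-line counting argument rather than a structural propagation.
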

\begin{proof}
  Due to the premises all inner faces are triangles and quadrangles. An inner vertex is part of four inner
  faces. Due to angle sums at most two of them can be triangles. If we denote the angles of the quadrangles
  at the central vertex by $\alpha_i$ the  area of the quadrangles is given by $\sum_i\left|\sin\alpha_i\right|$.
  A little calculus together with $0\le\alpha_i\le \pi$ yields the stated result.
\end{proof}

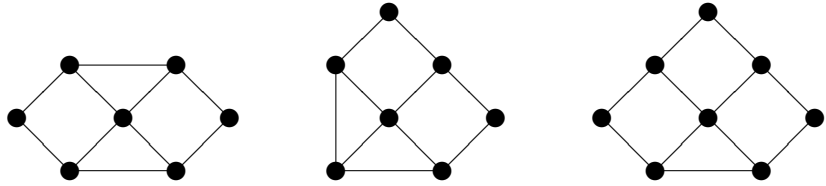
\begin{figure}[h]
  \begin{center}
    \setlength{\unitlength}{0.70cm}
    \begin{picture}(15,3)
      \put(1,0){\line(1,0){2}}
      \put(1,0){\line(1,1){1}}
      \put(3,0){\line(-1,1){1}}
      \put(2,1){\line(-1,1){1}}
      \put(2,1){\line(1,1){1}}
      \put(1,2){\line(1,0){2}}
      \put(1,0){\line(-1,1){1}}
      \put(0,1){\line(1,1){1}}
      \put(4,1){\line(-1,1){1}}
      \put(3,0){\line(1,1){1}}
      \put(1,0){\circle*{0.35}}
      \put(3,0){\circle*{0.35}}
      \put(1,2){\circle*{0.35}}
      \put(3,2){\circle*{0.35}}
      \put(2,1){\circle*{0.35}}
      \put(0,1){\circle*{0.35}}
      \put(4,1){\circle*{0.35}}
      \put(6,0){\line(0,1){2}}
      \put(6,0){\line(1,0){2}}
      \put(8,0){\line(-1,1){2}}
      \put(6,0){\line(1,1){1}}
      \put(8,0){\line(1,1){1}}
      \put(7,1){\line(1,1){1}}
      \put(6,2){\line(1,1){1}}
      \put(7,3){\line(1,-1){2}}
      \put(6,0){\circle*{0.35}}
      \put(6,2){\circle*{0.35}}
      \put(8,0){\circle*{0.35}}
      \put(7,1){\circle*{0.35}}
      \put(7,3){\circle*{0.35}}
      \put(8,2){\circle*{0.35}}
      \put(9,1){\circle*{0.35}}
      \put(12,0){\line(1,0){2}}
      \put(12,0){\line(-1,1){1}}
      \put(11,1){\line(1,1){1}}
      \put(14,0){\line(-1,1){2}}
      \put(12,0){\line(1,1){1}}
      \put(14,0){\line(1,1){1}}
      \put(13,1){\line(1,1){1}}
      \put(12,2){\line(1,1){1}}
      \put(13,3){\line(1,-1){2}}
      \put(11,1){\circle*{0.35}}
      \put(12,0){\circle*{0.35}}
      \put(12,2){\circle*{0.35}}
      \put(14,0){\circle*{0.35}}
      \put(13,1){\circle*{0.35}}
      \put(13,3){\circle*{0.35}}
      \put(14,2){\circle*{0.35}}
      \put(15,1){\circle*{0.35}}
    \end{picture}\\[2mm]
    \caption{Configurations of triangles and quadrangles.}
    \label{fig_configuration_1}
  \end{center}
\end{figure}

Determining the numbers $A_i$ for odd $i$, searching for a configuration from Figure~\ref{fig_configuration_1} and comparing the resulting lower bound on the area of the inner faces with the maximum values in Table~\ref{table_max_area} can clearly be done in linear time. We will call this the area argument.

\begin{figure}[h]
  \begin{center}
    \setlength{\unitlength}{0.70cm}
    \begin{picture}(4,4.5)
      \put(1,1){\line(1,0){2}}
      \put(0,2){\line(1,0){4}}
      \put(1,3){\line(1,0){2}}
      \put(1,1){\line(0,1){2}}
      \put(2,0){\line(0,1){4}}
      \put(3,1){\line(0,1){2}}
      \put(0,2){\line(1,1){2}}
      \put(0,2){\line(1,-1){2}}
      \put(4,2){\line(-1,-1){2}}
      \put(4,2){\line(-1,1){2}}
      \put(1,1){\circle*{0.35}}
      \put(1,2){\circle*{0.35}}
      \put(1,3){\circle*{0.35}}
      \put(2,1){\circle*{0.35}}
      \put(2,2){\circle*{0.35}}
      \put(2,3){\circle*{0.35}}
      \put(3,1){\circle*{0.35}}
      \put(3,2){\circle*{0.35}}
      \put(3,3){\circle*{0.35}}
      \put(0,2){\circle*{0.35}}
      \put(2,0){\circle*{0.35}}
      \put(4,2){\circle*{0.35}}
      \put(2,4){\circle*{0.35}}
    \end{picture}
    \caption{Example of a non unit distance graph.}
    \label{fig_example_1}
  \end{center}
\end{figure}
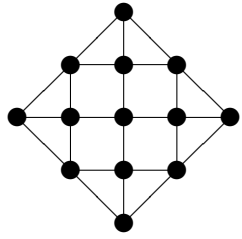

In order to enhance the area argument we may try to locate several face disjoint subgraphs from Figure~\ref{fig_configuration_1}. Therefore we consider the following binary linear program (BLP) which determines the maximum number of such face disjoint subgraphs for a given matchstick graph. For a precise formulation we need some further notation. We consider the faces which are adjacent to a given vertex $v$. By $fn(v)$ we denote the set of vertices and by $fs(v)$ we denote the multiset of the cardinalities of these faces. For the central vertices of the matchstick graphs in Figure ~\ref{fig_configuration_1} the face sets $fs(v)$ are given by $\{3,3,4,4\}$, $\{3,4,4,4\}$, and $\{3,4,4,4\}$, respectively.

\begin{equation}
  \max \sum_{v\in V} x_v\label{lp_3_4_configurations}
\end{equation}
subject to
\begin{align}
x_v & =0 && \forall v\in V,\,fs(v)\neq \{3,3,4,4\},\{3,4,4,4\}\nonumber\\
x_v+x_u & \le 1 &&\forall v\in V,\, u\in fn(v)\nonumber\\
x_v & \geq 0 && \forall v\in V\nonumber\\
x_v & \in\{0,1\} && \forall v\in V\nonumber
\end{align}

An example where the non-existence of a unit distance embedding can be shown using the area argument and BLP~(\ref{lp_3_4_configurations}) is given in Figure~\ref{fig_example_1}.

In addition to arguments based on area estimates we can also utilize information on the perimeter of sub-configurations to deduce the impossibility of a unit distance embedding in some cases.
\begin{Lemma}
  \label{lemma_perimeter}
  If $P$ is a polygon inside another polygon $Q$, where $P\neq Q$, then the perimeter of $Q$ has to be strictly larger than
  the perimeter of the convex hull of $P$.
\end{Lemma}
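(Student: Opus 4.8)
The plan is to reduce the statement to a standard fact about convex curves: if one convex region is contained in another, the boundary of the outer one is at least as long as the boundary of the inner one, with equality only when the two coincide. First I would pass from $P$ to its convex hull $\widetilde{P}$; since $P$ lies inside $Q$ and $Q$ is a polygon (hence the simple closed region it bounds is the relevant object), every vertex of $P$ lies in the closed region bounded by $Q$, and therefore the convex hull $\widetilde P$ — being the smallest convex set containing those vertices — is also contained in that region. So it suffices to show that the perimeter of $Q$ strictly exceeds the perimeter of $\widetilde P$ whenever $\widetilde P \subsetneq \overline{Q}$, and this inequality is the monotonicity of perimeter under inclusion of convex sets.

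The key steps, in order: (i) replace $P$ by $\widetilde P$ and note $\widetilde P\subseteq \overline Q$; (ii) invoke monotonicity of perimeter for nested convex sets to get $\mathrm{per}(\widetilde P)\le \mathrm{per}(Q)$, where $\mathrm{per}(Q)$ denotes the length of the boundary polygon $Q$ — note $Q$ need not itself be convex, but one can first compare $\mathrm{per}(\widetilde P)$ with $\mathrm{per}(\widetilde Q)$, the perimeter of the convex hull of $Q$, using $\widetilde P\subseteq\widetilde Q$, and then observe $\mathrm{per}(\widetilde Q)\le \mathrm{per}(Q)$ because taking the convex hull of a closed curve never increases its length; (iii) rule out equality. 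For step (ii) the cleanest justification is Cauchy's formula: the perimeter of a convex body equals $\frac12\int_0^{2\pi} w(\theta)\,d\theta$, where $w(\theta)$ is the width in direction $\theta$; inclusion of convex bodies gives pointwise domination of widths, hence of perimeters. For equality: if $\mathrm{per}(\widetilde P)=\mathrm{per}(Q)$ then all the inequalities in the chain are tight, forcing $w_{\widetilde P}(\theta)=w_{\widetilde Q}(\theta)$ for all $\theta$ and $\mathrm{per}(\widetilde Q)=\mathrm{per}(Q)$; the latter forces $Q$ to already be convex, i.e. $Q=\widetilde Q$, and equality of all widths between nested convex bodies forces $\widetilde P=\widetilde Q=Q$. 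Since $P$ is a polygon with all vertices inside $Q$ and $\widetilde P = Q$, one then checks $P=Q$, contradicting the hypothesis $P\neq Q$.

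The main obstacle is the hypothesis that $Q$ is merely "a polygon" and is not assumed convex: the naive statement "perimeter is monotone under set inclusion" is false for non-convex outer regions, so the argument must route through the convex hull of $Q$ and use the separate (and elementary, but worth stating) fact that replacing a simple closed polygonal curve by its convex hull does not increase the perimeter. A secondary subtlety is the exact meaning of "$P$ inside $Q$" — I would read it as: the open region bounded by $P$ is contained in the open region bounded by $Q$, equivalently every point of $P$ (boundary included) lies in $\overline Q$ — and make sure the equality analysis uses $P\neq Q$ as curves, not merely as regions. Once these conventions are fixed, the width/Cauchy-formula computation is routine and the strictness falls out of the degenerate cases above.
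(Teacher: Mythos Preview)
The paper states this lemma without proof, so there is nothing to compare your argument against directly. Your route through convex hulls is the standard one and handles the non-strict inequality cleanly: from $P\subseteq Q$ one gets $\mathrm{conv}\,P\subseteq\mathrm{conv}\,Q$, Cauchy's width formula (or the elementary ``cut off the pockets'' argument) gives $\mathrm{per}(\mathrm{conv}\,P)\le\mathrm{per}(\mathrm{conv}\,Q)$, and passing from a simple closed curve to its convex hull never increases length, so $\mathrm{per}(\mathrm{conv}\,P)\le\mathrm{per}(Q)$. Your step~(i) as first written, ``$\widetilde P\subseteq\overline Q$'', is false for non-convex $Q$, but you immediately repair this by comparing with $\widetilde Q$ instead, so no harm is done there.

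The strictness analysis, however, has a genuine gap. From equality you correctly deduce that $Q$ must be convex and $\mathrm{conv}\,P=Q$, but the final assertion ``one then checks $P=Q$'' is false. Take $Q$ to be the unit square and $P$ the simple pentagon with vertices $(0,0),(1,0),(1,1),(0,1),(\tfrac12,\tfrac12)$ in this cyclic order: then $P$ is a polygon contained in $Q$, $P\neq Q$, yet $\mathrm{conv}\,P=Q$ and hence $\mathrm{per}(Q)=\mathrm{per}(\mathrm{conv}\,P)$. So the strict inequality in the lemma, read literally, does not hold without an additional hypothesis such as $P$ lying in the open interior of $Q$, or simply $\mathrm{conv}\,P\neq Q$. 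For the uses the paper actually makes of the lemma (an equilateral quadrangle cannot contain a different one of the same side length; the triangle-chain bound on $k$) the non-strict inequality plus an easy direct check suffices, so the applications survive; but your equality case, as written, does not go through.
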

So we have a proof for the obvious fact that no equilateral quadrangle can contain another equilateral quadrangle of the same edge length. This argument is more valuable in combination with triangle chains, see Figure~\ref{fig_triangle_chain}.

\begin{figure}[h]
  \begin{center}
    \setlength{\unitlength}{0.70cm}
    \begin{picture}(6,2.5)
      \put(0,0.5){\line(1,0){6}}
      \put(0,0.5){\line(2,3){1}}
      \put(2,0.5){\line(2,3){1}}
      \put(4,0.5){\line(2,3){1}}
      \put(1,2){\line(2,-3){1}}
      \put(3,2){\line(2,-3){1}}
      \put(5,2){\line(2,-3){1}}
      \put(1,2){\line(1,0){4}}
      \put(0,0.5){\circle*{0.35}}
      \put(2,0.5){\circle*{0.35}}
      \put(4,0.5){\circle*{0.35}}
      \put(6,0.5){\circle*{0.35}}
      \put(1,2){\circle*{0.35}}
      \put(3,2){\circle*{0.35}}
      \put(5,2){\circle*{0.35}}
      \put(0,0){$x_0$}
      \put(2,0){$x_1$}
      \put(3.8,0){$\dots$}
      \put(6,0){$x_s$}
    \end{picture}
    \caption{Triangle chains.}
    \label{fig_triangle_chain}
  \end{center}
\end{figure}
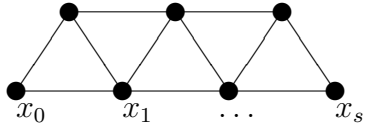

\begin{Corollary}
  \label{cor_triangle_chain}
  If a matchstick graph $\mathcal{M}$ contains a triangle chain with vertices $x_0,\dots,x_s$, then it
  is either a triangulation or we have $k(\mathcal{M})\ge 2s+2$.
\end{Corollary}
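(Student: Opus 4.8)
The plan is to combine the rigidity of a unit-length triangle chain with Lemma~\ref{lemma_perimeter}. First I would record that such a chain has essentially only one shape. Since every edge of $\mathcal{M}$ has length $1$, each of its triangles is equilateral with all interior angles equal to $\frac{\pi}{3}$, and consecutive triangles of the chain share an edge. Being faces of $\mathcal{M}$, these triangles have pairwise disjoint interiors, so each new triangle of the chain must be placed on the reflected side of the shared edge; by induction the chain with vertices $x_0,\dots,x_s$ is congruent to the planar configuration of Figure~\ref{fig_triangle_chain}. In particular its $2s+1$ vertices are pairwise distinct, so they really are $2s+1$ vertices of $\mathcal{M}$; the $2s-1$ triangles tile a trapezoid $T$ whose parallel sides are carried by the path $x_0x_1\cdots x_s$ of length $s$ and a parallel path of length $s-1$, and whose two slanted sides have length $1$. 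Hence $T$ is convex and the perimeter of its convex hull equals $s+(s-1)+1+1=2s+1$.

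Next I would assume that $\mathcal{M}$ is not a triangulation, so that $\mathcal{M}$ has an inner face which is not a triangle. The set $T$ is the union of the $2s-1$ triangular inner faces of the chain, hence a polygon contained in the region $Q$ bounded by the outer boundary of $\mathcal{M}$. If $T$ were equal to $Q$, then these $2s-1$ triangles would be all the inner faces of $\mathcal{M}$ and $\mathcal{M}$ would be a triangulation; so $T\neq Q$. Lemma~\ref{lemma_perimeter} now yields that the perimeter of $Q$ is strictly larger than the perimeter of the convex hull of $T$, that is, strictly larger than $2s+1$. Since the outer boundary of $\mathcal{M}$ is made up of unit-length edges, its perimeter is a positive integer, and -- when this boundary is a simple cycle -- it equals $k(\mathcal{M})$. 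Therefore $k(\mathcal{M})>2s+1$, i.e.{} $k(\mathcal{M})\ge 2s+2$.

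I expect the geometric core -- that the chain is the flat trapezoid and that its convex hull has perimeter $2s+1$ -- to be routine; the delicate points concern the outer face. One must be sure that $T$ is a proper sub-polygon of $Q$, which is exactly where the hypothesis that $\mathcal{M}$ is not a triangulation enters, and that the perimeter of $Q$ really equals $k(\mathcal{M})$; the latter is immediate once the outer boundary is a simple cycle, as for $2$-connected $\mathcal{M}$ (in particular for the $3$-connected benchmark graphs), and would otherwise need passing to the block of $\mathcal{M}$ that contains the chain. Finally, it is worth noting that the improvement from $2s+1$ to $2s+2$ comes for free: it is forced by the integrality of the outer perimeter together with the strict inequality in Lemma~\ref{lemma_perimeter}.
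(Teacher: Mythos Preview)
Your argument is correct and follows exactly the route the paper intends: the corollary is stated there without proof, as an immediate consequence of Lemma~\ref{lemma_perimeter}, and you have supplied precisely that deduction --- the rigid triangle chain is a convex trapezoid of perimeter $2s+1$, the outer boundary is a strictly larger unit-edge polygon unless the chain exhausts all inner faces, and integrality upgrades the strict inequality to $k(\mathcal{M})\ge 2s+2$. Your caveats about $2$-connectivity (so that the outer boundary is a simple cycle and its length coincides with $k(\mathcal{M})$) are appropriate and match the paper's standing setting for the angle and perimeter arguments.
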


We remark that triangle chains which are maximal with respect to inclusion can be determined in linear time. 

Another source for conditions on unit distance graphs in the plane comes from angle arguments for $2$-connected planar graphs. The first easy facts are: The inner angles of an equilateral triangle equal $\frac{\pi}{3}$, the sum of the inner angles of an $s$-gon equals $(s-2)\pi$ for inner faces and $(s+2)\pi$ for the outer face. Two neighboring inner angles of an equilateral quadrangle sum up to $\pi$. For inner $s$-gons with $s\ge 5$ we have:

\begin{Lemma}
  \label{lemma_angles}
  If $\alpha$ and $\beta$ are two neighboring angles of an equilateral $k$-gon, then $\alpha+\beta>\frac{\pi}{2}$ holds.
\end{Lemma}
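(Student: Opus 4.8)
The plan is to argue by contradiction. Write the two neighboring angles as $\alpha$ at a vertex $B$ and $\beta$ at a vertex $C$, where $BC$ is the edge joining them; let $A$ be the other neighbor of $B$ and $D$ the other neighbor of $C$, so that $|AB|=|BC|=|CD|=1$. If $\alpha\ge\pi$ or $\beta\ge\pi$ the inequality is immediate, so assume $\alpha,\beta\in(0,\pi)$ and, for a contradiction, $\alpha+\beta\le\frac{\pi}{2}$; then in particular $0<\alpha,\beta<\frac{\pi}{2}$.

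First I would set up an auxiliary triangle. Since $BC$ is an edge of the polygon, exactly one of the two half-planes bounded by the line through $B$ and $C$ meets the interior of the $k$-gon locally along $BC$; because $\alpha<\pi$ and $\beta<\pi$, the ray from $B$ through $A$ and the ray from $C$ through $D$ both emanate into that half-plane, making angles $\alpha$ and $\beta$ with the segment $BC$. As $\alpha+\beta<\pi$, these two rays are not parallel and meet in a single point $P$ on the interior side of $BC$, so that $B$, $C$, $P$ form a triangle with angles $\alpha$ at $B$, $\beta$ at $C$, and $\pi-\alpha-\beta$ at $P$.

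Then comes the one genuine computation: the law of sines in the triangle $BCP$ gives $|BP|=\frac{\sin\beta}{\sin(\alpha+\beta)}$ and $|CP|=\frac{\sin\alpha}{\sin(\alpha+\beta)}$. Since $\sin$ is strictly increasing on $\left[0,\frac{\pi}{2}\right]$ and $0<\alpha,\beta<\alpha+\beta\le\frac{\pi}{2}$, we get $0<|BP|<1$ and $0<|CP|<1$. Hence $P$ lies strictly between $B$ and $A$ on the segment $AB$ and strictly between $C$ and $D$ on the segment $CD$. Moreover the lines through $A,B$ and through $C,D$ are distinct, since the angle between them at $P$ equals $\pi-\alpha-\beta\notin\{0,\pi\}$; thus $P$ is their only common point, and the edges $AB$ and $CD$ cross at the interior point $P$. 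This contradicts the fact that the boundary of the $k$-gon is a simple closed curve (equivalently, that distinct edges of a matchstick graph do not cross), and therefore $\alpha+\beta>\frac{\pi}{2}$.

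The only delicate point is the geometric bookkeeping in the second paragraph: one has to be sure that $P$ exists and that $A$ and $D$ lie on the far side of $P$ from $B$ and $C$, i.e.{} that $P$ lies on the segments $AB$ and $CD$ rather than on their prolongations beyond $A$ and $D$. Once this is settled, the proof reduces to the short monotonicity estimate above. As an alternative to invoking the law of sines, one may place $B$ and $C$ on the $x$-axis with the interior side above, write $A=(\cos\alpha,\sin\alpha)$ and $D=(1-\cos\beta,\sin\beta)$, and solve the $2\times 2$ linear system for the intersection of the segments $AB$ and $CD$ directly; the resulting parameters come out as $\frac{\sin\beta}{\sin(\alpha+\beta)}$ and $\frac{\sin\alpha}{\sin(\alpha+\beta)}$, and the same estimate concludes.
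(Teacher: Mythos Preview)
Your proof is correct and rests on the same geometric idea as the paper: if $\alpha+\beta\le\frac{\pi}{2}$, the two unit arms $BA$ and $CD$ adjacent to the common edge $BC$ must intersect, contradicting simplicity of the polygon. The paper argues this more tersely via the extremal case where the arms just touch (yielding an isosceles triangle with $\alpha+2\beta=\pi$, hence $\alpha+\beta=\pi-\beta>\frac{\pi}{2}$), whereas you locate the crossing point explicitly with the law of sines; your version is longer but fills in the details the paper leaves implicit.
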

\begin{proof}
  We assume w.l.o.g.{} $\alpha\le\beta$. For minimal $\alpha+\beta$ we can assume that the two arms of the neighboring angles touch,
  so that we have an isosceles triangle with $\alpha+2\beta=\pi$. Thus $\beta<\frac{\pi}{2}$ and
  $\alpha+\beta=\pi-\beta>\frac{\pi}{2}$.
\end{proof}

With this we can check in linear time if the resulting bounds for the sum of the inner or outer angles contradicts the known exact value for each face of a given matchstick graph. For the planar graph on the left hand side in Figure~\ref{fig_example_2} we know that the outer angles of the central quadrangle should sum up to $6\pi$. On the other hand this quadrangle is adjacent to $8$~angles which are part of a triangle and to two pairs of neighbored angles which are part of quadrangles. Due to $8\cdot\frac{\pi}{3}+2\cdot\pi\neq (4+2)\cdot\pi$ we have a contradiction. For the planar graph on the right hand side of Figure~\ref{fig_example_2} we obtain a contradiction by looking at the angles of the outer heptagon. We will call this procedure the angle argument.


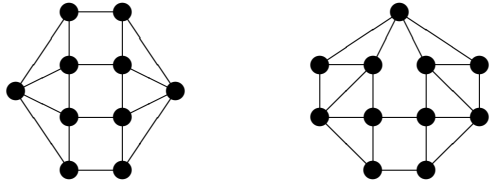
\begin{figure}[h]
  \begin{center}
    \setlength{\unitlength}{0.70cm}
    \begin{picture}(3,3.5)
      \put(1,0){\line(1,0){1}}
      \put(1,1){\line(1,0){1}}
      \put(1,2){\line(1,0){1}}
      \put(1,3){\line(1,0){1}}
      \put(1,0){\line(0,1){3}}
      \put(2,0){\line(0,1){3}}
      \put(0,1.5){\line(2,1){1}}
      \put(0,1.5){\line(2,-1){1}}
      \put(0,1.5){\line(2,3){1}}
      \put(0,1.5){\line(2,-3){1}}
      \put(3,1.5){\line(-2,1){1}}
      \put(3,1.5){\line(-2,-1){1}}
      \put(3,1.5){\line(-2,3){1}}
      \put(3,1.5){\line(-2,-3){1}}
      \put(1,0){\circle*{0.35}}
      \put(1,1){\circle*{0.35}}
      \put(1,2){\circle*{0.35}}
      \put(1,3){\circle*{0.35}}
      \put(2,0){\circle*{0.35}}
      \put(2,1){\circle*{0.35}}
      \put(2,2){\circle*{0.35}}
      \put(2,3){\circle*{0.35}}
      \put(0,1.5){\circle*{0.35}}
      \put(3,1.5){\circle*{0.35}}
    \end{picture}
    \quad\quad\quad\quad
    \setlength{\unitlength}{0.70cm}
    \begin{picture}(3,3.5)
      \put(1,0){\line(1,0){1}}
      \put(0,1){\line(1,0){3}}
      \put(0,2){\line(1,0){1}}
      \put(2,2){\line(1,0){1}}
      \put(1,0){\line(0,1){2}}
      \put(2,0){\line(0,1){2}}
      \put(0,1){\line(0,1){1}}
      \put(3,1){\line(0,1){1}}
      \put(0,1){\line(1,-1){1}}
      \put(0,1){\line(1,1){1}}
      \put(3,1){\line(-1,-1){1}}
      \put(3,1){\line(-1,1){1}}
      \put(1,2){\line(1,2){0.5}}
      \put(2,2){\line(-1,2){0.5}}
      \put(0,2){\line(3,2){1.5}}
      \put(3,2){\line(-3,2){1.5}}
      \put(1,0){\circle*{0.35}}
      \put(2,0){\circle*{0.35}}
      \put(0,1){\circle*{0.35}}
      \put(1,1){\circle*{0.35}}
      \put(2,1){\circle*{0.35}}
      \put(3,1){\circle*{0.35}}
      \put(0,2){\circle*{0.35}}
      \put(1,2){\circle*{0.35}}
      \put(2,2){\circle*{0.35}}
      \put(3,2){\circle*{0.35}}
      \put(1.5,3){\circle*{0.35}}
    \end{picture}
    \caption{Examples of non unit distance graphs.}
    \label{fig_example_2}
  \end{center}
\end{figure}

We would like to remark that the \textit{local} angle argument does not give any contradiction for the planar graph in Figure~\ref{fig_example_3}. If we incorporate all our \textit{local} conditions into the following linear program, we can deduce a contradiction. Therefore let $\mathcal{A}$ the set of angles in a given $2$-connected planar graph $\mathcal{M}$ and $o(f)\subseteq\mathcal{A}$ the set of outer angles for a given face $f$ of $\mathcal{M}$ consisting of $|f|$ edges.

\begin{equation}
  \max y\label{angle_lp}
\end{equation}
subject to
\begin{align}
x_a -y & \ge 0 && \forall a\in\mathcal{A}\nonumber\\
x_a & = \frac{\pi}{3} && \forall a\in\mathcal{A}\text{ which are part of a triangle}\nonumber\\
x_u+x_v & = \pi && \forall u,v\in\mathcal{A}\text{ which are adjacent angles of}\nonumber\\
&&&\text{a quadrangle}\nonumber\\
x_u+x_v-y & \ge \pi && \forall u,v\in\mathcal{A}\text{ which are adjacent angles of}\nonumber\\
&&&\text{a $s$-gon with $s\ge 5$}\nonumber\\
\sum_{a\in o(f)} x_a & = (|f|+2)\cdot \pi && \forall \text{ inner faces $f$ of $\mathcal{M}$}\nonumber\\
\sum_{a\in o(f)} x_a & = (|f|-2)\cdot \pi && \text{for the outer face $f$ of $\mathcal{M}$}\nonumber
\end{align}

If the linear program~(\ref{angle_lp}) does not have a feasible solution with $y>0$ then $\mathcal{M}$ cannot be a matchstick graph.

\begin{figure}[h]
  \begin{center}
    \setlength{\unitlength}{0.70cm}
    \begin{picture}(6,3.5)
      \put(0,1){\line(1,0){5}}
      \put(0,2){\line(1,0){5}}
      \put(0,1){\line(0,1){1}}
      \put(3,0){\line(0,1){3}}
      \put(4,1){\line(0,1){1}}
      \put(5,1){\line(0,1){1}}
      \put(4,2){\line(-1,1){1}}
      \put(5,2){\line(-2,1){2}}
      \put(4,1){\line(-1,-1){1}}
      \put(5,1){\line(-2,-1){2}}
      \put(2,2){\line(1,1){1}}
      \put(2,1){\line(1,-1){1}}
      \put(6,1.5){\line(-2,1){1}}
      \put(6,1.5){\line(-2,-1){1}}
      \put(1,1.5){\line(-2,-1){1}}
      \put(1,1.5){\line(-2,1){1}}
      \put(1,1.5){\line(2,-1){1}}
      \put(1,1.5){\line(2,1){1}}
      \put(0,1){\circle*{0.35}}
      \put(0,2){\circle*{0.35}}
      \put(1,1.5){\circle*{0.35}}
      \put(2,1){\circle*{0.35}}
      \put(2,2){\circle*{0.35}}
      \put(3,0){\circle*{0.35}}
      \put(3,1){\circle*{0.35}}
      \put(3,2){\circle*{0.35}}
      \put(3,3){\circle*{0.35}}
      \put(4,1){\circle*{0.35}}
      \put(4,2){\circle*{0.35}}
      \put(5,1){\circle*{0.35}}
      \put(5,2){\circle*{0.35}}
      \put(6,1.5){\circle*{0.35}}
    \end{picture}
    \caption{Example of a non unit distance graph.}
    \label{fig_example_3}
  \end{center}
\end{figure}
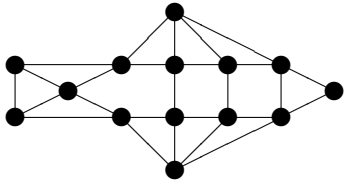


Applying the checks described above which can be performed in linear time leaves over only $5$, $50$, $279$, and $5051$ graphs for $30$, $31$, $32$, and $33$ vertices, respectively. In all these cases the corresponding linear programs~(\ref{angle_lp}) do not have feasible solutions, so that we have $n\ge 34$ for each $3$-connected $4$-regular matchstick graph.

\section{Conclusion and outlook}

\noindent
In this paper we have developed some simple and algorithmically fast necessary criteria for unit distance graphs in the plane. As a benchmark problem we have used the $3$-connected $4$-regular planar graphs with up to $33$~vertices, where the necessary criteria were also sufficient.

We would like to end with some open problems. The key motivation for this article was the question for the smallest $4$-regular matchstick graph, see also \cite{0756.05068}. To this end we will prove a lower bound of $34$~vertices in a companion paper. Similarly one can ask for the smallest $3$-regular matchstick graph with girth~$5$. Here the smallest known example consists of $180$~vertices \cite{girth_four}. The algorithmically most challenging problem is an exhaustive enumeration of all matchstick graphs for a reasonable number of vertices. The known bounds for the maximum number $\tilde{u}(n)$ of edges, see e.~g.{} \cite{1086.52001}, are given by
\[
  \left\lfloor 3n-\sqrt{12n-3}\right\rfloor\le \tilde{u}(n)\le 3n-O\!\left(\sqrt{n}\right),
\]
where the lower bound is conjectured to be exact. If the edges are allowed to cross, we refer to \cite{schade} for the exact values of the maximum number of edges for $n\le 14$. Of course we have to mention the famous open question for the chromatic number of unit distance graphs (or the plane).

\nocite{matchsticks_in_the_plane}


\end{document}